\newtheorem{theorem}{Theorem}[section]
\newtheorem{proposition}[theorem]{Proposition}
\renewcommand{\wr}{\mathop{\mathrm{wr}}}
\newcommand{\GL}{\mathop{\mathrm{GL}}}
\newcommand{\AGL}{\mathop{\mathrm{AGL}}}
\newcommand{\Aut}{\mathop{\mathrm{Aut}}}
\def\cent#1#2{{\bf C}_{#1}(#2)}
\def\norm#1#2{{\bf N}_{#1}(#2)}
\title[Regular abelian Carter subgroups]{Abelian Carter subgroups in finite permutation groups}
\author[E. Jabara]{Enrico Jabara}
\address{Enrico Jabara, Dipartimento di Filosofia e Beni Culturali, \newline
University  C\'a Foscari,  Dorsoduro 3484/D – 30123 Venezia, I-30100 Venezia, Italy. }
\email{jabara@unive.it}
\author[P. Spiga]{Pablo Spiga}
\address{Pablo Spiga, Dipartimento di Matematica Pura e Applicata,\newline
 University  Milano-Bicocca, Via Cozzi 53, 20126 Milano, Italy.}
\email{pablo.spiga@unimib.it}
\thanks{Address correspondence to Pablo Spiga: pablo.spiga@unimib.it.}
\subjclass[2010]{Primary 20B25; Secondary 05E18}
\keywords{fixed-point-free, regular group, Carter subgroup,  permutation group}
\begin{document}

\begin{abstract}
We show that  a finite permutation group containing a regular abelian self-normalizing subgroup is soluble.
\end{abstract}

\maketitle

\section{Introduction}\label{intro}

Let $G$ and $A$ be finite groups with $A$ acting on $G$ as a group of automorphisms.    When $\cent G A = 1$, it is customary to say that $A$ acts fixed-point-freely on $G$,
that is, $1$ is the only element of $G$ invariant by every element of $A$.
From the seminal work of Thompson~\cite{Th}, fixed-point-free groups of automorphisms have attracted considerable interest and have shown to be remarkably important within finite group theory. It is well-established that in many cases the condition $\cent G A=1$ forces the group $G$ to be soluble. One of the main contributions is  the result of Rowley~\cite{Ro}, showing that groups admitting a fixed-point-free automorphism are soluble. This result was then generalized by Belyaev and Hartley~\cite{BH}, showing that if a nilpotent group acts fixed-point-freely on $G$, then $G$ is soluble. In this paper,  as an application of this remarkable result, we prove the following.

\begin{theorem}\label{thm}
If $G$ is a finite transitive permutation group with a regular abelian self-normalizing subgroup, then $G$ is soluble.
\end{theorem}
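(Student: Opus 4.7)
My plan is to argue by induction on $|G|$, taking $G$ to be a counterexample of minimal order. The decisive preliminary observation is that, since $A$ is abelian and $\norm{G}{A}=A$, one automatically has $\cent{G}{A}=A$. Consequently, for any normal subgroup $N$ of $G$ with $N\cap A=1$ one gets $\cent{N}{A}=N\cap\cent{G}{A}=1$, so $A$ acts fixed-point-freely on $N$ by conjugation. As $A$ is abelian, hence nilpotent, the Belyaev--Hartley theorem then forces every such $N$ to be soluble. This is the engine that will drive each reduction; the whole challenge is to locate enough normal subgroups $N$ of $G$ for which $N\cap A=1$ can be established.

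The first step is to reduce to the case in which $A$ is core-free in $G$. If $\bigcap_{g\in G}A^{g}\neq 1$, then I pick a minimal normal subgroup $N$ of $G$ with $N\leq A$ (necessarily an elementary abelian $p$-group), and consider the induced action of $G$ on the block system formed by the $N$-orbits on $\Omega$. A direct computation using the regular action gives that $K\cap A=N$, where $K$ is the kernel of the block action; one then checks that the image of $A$ in $G/K$ is again regular abelian and self-normalizing in the image of $G$. By minimality the image is soluble, and a further analysis of $K$ --- which is a transitive permutation group on one block containing $N$ as a regular abelian normal subgroup, with $K=N\rtimes(H\cap K)$ for a point stabilizer $H$ --- should give $K$ soluble, yielding $G$ soluble, a contradiction. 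So I may assume $A$ is core-free in $G$.

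Now let $M$ be a minimal normal subgroup of $G$. Core-freeness gives $M\not\leq A$, and the crux of the proof is to show $M\cap A=1$. If $M$ is abelian, then $M\cap A$ is normalized both by $M$ and by $A$; if $MA=G$, this puts $M\cap A$ in the core of $A$ and thus $M\cap A=1$, while the case $MA<G$ requires exploiting the regularity of $A$ on $\Omega$ to reach the same conclusion. If $M$ is non-abelian then $M=T^{k}$ for a non-abelian simple group $T$, and one must rule out $M\cap A\neq 1$ by combining the identity $\cent{M}{A}=M\cap A$ with structural information about abelian subgroups of direct powers of simple groups --- presumably by CFSG. Once $M\cap A=1$ has been obtained, Belyaev--Hartley makes $M$ soluble hence elementary abelian, and a reduction modulo $M$ using the Frobenius structure of $AM$ (kernel $M$, complement $A$) to verify that $AM/M$ is again self-normalizing in $G/M$ should complete the induction.

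The main obstacle I anticipate is the step establishing $M\cap A=1$ for an arbitrary minimal normal subgroup $M$ that is not contained in $A$; the non-abelian case in particular seems to require the classification of finite simple groups, since one must control how an abelian Carter-like subgroup of $G$ can intersect a direct power of a simple group it acts on. A secondary but nontrivial difficulty is making sure that, at each reduction, the hypothesis of having a \emph{regular} abelian self-normalizing subgroup transfers cleanly to the quotient, as this feature is not automatically preserved by an arbitrary quotient.
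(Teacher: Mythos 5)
Your opening observation is correct and is in fact the engine of the published argument as well: since $A$ is abelian and self-normalizing, $\cent GA=A$, so $A$ acts fixed-point-freely on any normal subgroup it meets trivially, and Belyaev--Hartley then forces that subgroup to be soluble. But the proposal leaves unproved exactly the three steps on which the whole proof turns, and each requires a specific tool you have not identified. First, the transfer of self-normalization to quotients --- which you flag as a ``secondary difficulty'' but never resolve --- is not a routine check: the paper proves that $A^{\mathcal B}$ is self-normalizing in $G^{\mathcal B}$ by taking $g$ normalizing $AG_{(\mathcal B)}$, observing that $A$ and $A^g$ are then two Carter subgroups of $AG_{(\mathcal B)}$, and invoking Vdovin's theorem that any two Carter subgroups of a finite group are conjugate in order to pull $g$ back into $AG_{(\mathcal B)}$. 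Without Vdovin (or an equivalent conjugacy statement) this step, which you need at every reduction, does not go through. The same device yields the key identity $\norm G{A_0}=\cent G{A_0}$ for every $A_0\leq A$, which is what the paper ultimately contradicts in both of its cases.

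Second, the case of a non-abelian minimal normal subgroup $M$ with $M\cap A\neq 1$ --- which you defer to ``presumably by CFSG'' --- is not a matter of bounding abelian subgroups of $T^k$: the paper handles it by showing that the induced block group $H$ is one of the primitive almost simple groups with a regular abelian subgroup classified by Li, and then checking case by case that the simple group in question contains an element normalizing but not centralizing the projection $A_1$ of $A\cap M$, contradicting $\norm G{A_0}=\cent G{A_0}$. Third, and most seriously, your claim that the kernel $K$ of the block action is soluble ``by a further analysis'' is unjustified: $K$ acts on each block as a subgroup of $\AGL_d(p)$ normalizing the regular elementary abelian socle, and such groups are generally insoluble. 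This affine situation is the longest and most delicate part of the published proof: one must either apply Belyaev--Hartley to $G_{(\mathcal B)}/U$ (where $U$ is the intersection of $G$ with the direct product of the socles) to produce an element violating the normalizer-equals-centralizer identity, or, when $A_1$ is a proper regular subgroup of the affine group, explicitly construct such an element inside the wreath product by a coordinate-by-coordinate argument. None of these three ingredients is supplied, nor is any of them replaceable by the general principles you cite, so the proposal is a plan rather than a proof.
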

(A self-normalizing nilpotent subgroup of a finite group is called a \textit{Carter} subgroup.)  Theorem~\ref{thm} is provoked by some recent investigations~\cite{DPS} on Cayley graphs over abelian groups. In fact, Dobson, Verret and the second author have recently proved a conjecture of Babai and Godsil~\cite[Conjecture~$2.1$]{BG} concerning the enumeration of Cayley graphs over abelian groups. In the approach in~\cite{DPS}, at a critical juncture~\cite[proof of Theorem~$1.5$]{DPS}, it is necessary to have structural information on finite permutation groups admitting a regular abelian Carter subgroup. In view of~\cite{DPS}, any  improvement in our understanding of the structure of the automorphism group of a Cayley graph over an abelian group requires a detailed description of the finite permutation groups containing a regular abelian Carter subgroup. Theorem~\ref{thm} is a first step in this direction.


The hypothesis of $A$ being abelian is rather crucial in Theorem~\ref{thm}. For instance, for each Mersenne prime $p=2^\ell-1$, the group $\mathrm{PSL}_2(p)$ acts primitively on the points of the projective line and contains a regular Carter subgroup isomorphic to the dihedral group of order $2^\ell$.

A well-known theorem of Carter~\cite{Carter} shows that every finite soluble group contains exactly one conjugacy class of Carter subgroups. On the other hand, a finite non-soluble group may have no Carter subgroup, as witnessed by the  alternating group $\Aut(5)$. However, using the Classification of the Finite Simple Groups, Vdovin~\cite{Vdovin} has extended the result of Carter by showing that, if a finite group $G$ contains a Carter subgroup, then every two distinct Carter subgroups of $G$ are conjugate. 

The main ingredients in our proof of Theorem~\ref{thm} are two results of fundamental importance in finite group theory. The first is the classification of  Li~\cite{chli} of the finite primitive groups containing an abelian regular subgroup. This classification will allow us to prove Theorem~\ref{thm} for primitive groups by a simple direct inspection (see Proposition~\ref{prim}). The second is the remarkable theorem of  Vdovin~\cite{Vdovin} showing that any two Carter subgroups of a finite group are conjugate. This result will allow us to use induction for proving Theorem~\ref{thm}. In particular, as~\cite{BH,chli,Vdovin} depend upon the Classification of the Finite Simple Groups, so does Theorem~\ref{thm}.

\section{Proof of Theorem~\ref{thm}}\label{proofs}

Before proving Theorem~\ref{thm} we need an auxiliary result.
\begin{proposition}\label{prim}Let $G$ be a finite primitive permutation group with a regular abelian subgroup $A$. Then either   $\norm G A\neq A$, or $G=A$ has prime order.
\end{proposition}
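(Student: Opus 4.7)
The strategy is to apply Li's classification~\cite{chli} of the finite primitive permutation groups containing an abelian regular subgroup, and then to verify the statement case by case.

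First I would dispose of the degenerate situation $G=A$. Since $A$ is regular on $\Omega$, the stabilizer of any point of $\Omega$ is trivial, hence $A$ acts primitively on itself by right multiplication. Any non-trivial proper subgroup of $A$, being normal in the abelian group $A$, would give a non-trivial system of imprimitivity; therefore $A$ has no non-trivial proper subgroup and is cyclic of prime order.

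From now on assume $G>A$ and the goal is $\norm G A>A$. By~\cite{chli}, one of the following occurs:
\begin{enumerate}
\item[(i)] $G$ is of affine type, with $A$ the elementary abelian socle of $G$;
\item[(ii)] $G\le H\wr\Sym(k)$ acts in product action on $\Delta^k$ with $k\ge 2$, and the regular abelian subgroup has the form $A=A_1^k$, where $A_1$ is a regular abelian subgroup of a primitive group $H$ on $\Delta$;
\item[(iii)] $G$ is almost simple and $(G,A,|\Omega|)$ belongs to the short list of examples tabulated by Li.
\end{enumerate}
In case~(i), $A$ is normal in $G$, so $\norm G A=G>A$ is immediate. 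In case~(ii), the top group $\Sym(k)$ in the wreath product permutes the direct factors of $A_1^k$ and thus normalizes $A$; since $G$ is primitive in product action, its projection onto $\Sym(k)$ is transitive, hence non-trivial, and a preimage of any non-trivial element of this projection is an element of $G\setminus A$ normalizing $A$. So $\norm G A>A$.

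The bulk of the work lies in case~(iii), where one must run through each entry of Li's table and check that the normalizer of the regular abelian subgroup is strictly larger than the subgroup itself. For the Singer-cycle examples (such as $\PSL_d(q)$ acting on the points of the projective space, and the sporadic examples $M_{11}$, $M_{23}$, $\PSL_2(11)$, etc.), an appropriate field-automorphism or Frobenius element normalizes $A$ without centralizing it, so $\norm G A>A$. For $\Alt(p)$ and $\Sym(p)$ in their natural action on $p$ points, $A$ is a Sylow $p$-subgroup, and its normalizer contains an element of order $(p-1)/2$ or $p-1$ respectively (a generator of the $\AGL_1(p)$-structure on a Sylow $p$-subgroup), whence again $\norm G A>A$. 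The remaining sporadic entries are treated by the same inspection.

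The main obstacle is conceptual rather than technical: the proposition rests essentially on the full force of Li's classification, and once that tool is in hand the verification in each of the finitely many cases is routine. I do not see a natural way to avoid the classification, because the statement fails badly without the hypothesis of abelian regularity, as the excerpt already notes with $\PSL_2(2^\ell-1)$.
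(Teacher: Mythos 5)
Your overall strategy --- invoke Li's classification and verify the resulting cases --- is the same as the paper's, and your reduction of the degenerate case $G=A$ to prime order is fine; but two of your three cases contain genuine gaps. In case~(i) you assume that in the affine situation $A$ \emph{is} the socle and conclude $A\lhd G$. Li's theorem does not give you this: it only says that the socle $N$ is elementary abelian, while the regular abelian subgroup $A$ may be a different group of the same order. For example, $G=\Sym(4)=\AGL_2(2)$ is primitive of affine type on $4$ points with socle the Klein four-group, yet $A=\langle(1\,2\,3\,4)\rangle$ is a regular abelian subgroup that is not normal. The conclusion still holds there ($\norm G A$ is dihedral of order $8$), but your argument does not prove it. The paper's fix: $|A|=|N|$ forces $A$ to be a $p$-group, so $AN$ is a $p$-group; if $AN>A$ then $\norm {AN}{A}>A$ by the normalizer-growth property of $p$-groups, and if $AN=A$ then $A=N\lhd G$.

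In case~(ii) you claim that a preimage in $G$ of a non-trivial element of the projection to $\Sym(k)$ normalizes $A=A_1^k$. Such a preimage has the form $(h_1,\dots,h_k)\sigma$ with arbitrary $h_i\in H$, and it normalizes $A_1^k$ only if the $h_i$ suitably normalize $A_1$; nothing guarantees this, and $G$ need not contain the pure top-group element $\sigma$ itself. So this step fails as written. The paper avoids the top group entirely: it shows that already inside the normal subgroup $\widetilde{T}_1\times\cdots\times\widetilde{T}_\ell$ supplied by Li's theorem one has $\norm{\widetilde{T}_i}{A\cap\widetilde{T}_i}>A\cap\widetilde{T}_i$ for each $i$ (by exactly the coordinatewise inspection you carry out in your case~(iii)), whence $\norm G A\geq \norm{\widetilde{T}_1}{A\cap\widetilde{T}_1}\times\cdots\times\norm{\widetilde{T}_\ell}{A\cap\widetilde{T}_\ell}>A$. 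In other words, the almost simple inspection of your case~(iii) is not a separate terminal case but the engine that drives the entire non-abelian-socle case; your product-action case cannot be dispatched without it.
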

\begin{proof} 
The finite primitive groups containing an abelian regular subgroup are classified by  Li  in~\cite{chli}. In fact, from~\cite[Theorem~$1.1$]{chli}, we see that either the socle $N$ of $G$ is abelian, or $N$ is  the direct product of $\ell\geq 1$ pairwise isomorphic non-abelian simple groups and $G$ is endowed of the primitive product action. We consider these two cases separately.

Assume that $N$ is abelian. Then $N$ is an elementary abelian $p$-group, for some prime $p$. Since $N$ and $A$ both act regularly, we have $|N|=|A|$ and hence $A$ is a $p$-group. Therefore $AN$ is also a $p$-group. Suppose  $AN>A$. Then (from basic properties of $p$-groups) we have $\norm{AN}A>A $ and hence $\norm{G}A\neq A$. Suppose $AN=A$. Then $A=N\lhd G$ and, if $G>A$, then $\norm G A=G\neq A$, and if $G=A$, then $A$ must have prime order.

Assume that $N$ is non-abelian. Let $T_1,\ldots,T_\ell$ be the simple direct factors of $N$. So $N=T_1\times \cdots\times T_\ell$. From~\cite[Theorem~$1.1$~(2)]{chli}, we see that $G$ contains a normal subgroup $\widetilde{N}=\widetilde{T}_1\times \cdots \times \widetilde{T}_\ell$ with $\widetilde{T}_i$ having socle $T_i$ for each $i\in \{1,\ldots,\ell\}$, with $A\leq \widetilde{N}$ and with $$A=(A\cap \widetilde{T}_1)\times \cdots\times (A\cap \widetilde{T}_\ell).$$
Furthermore, for $i\in \{1,\ldots,\ell\}$, each of the possible pairs $(\widetilde{T}_i,A\cap \widetilde{T}_i)$ is listed in~\cite[Theorem~$1.1$~(2)~(i),\ldots,(iv)]{chli}. An easy inspection on each of these four cases reveals that $\norm{{\widetilde{T}_i}}{{A\cap \widetilde{T}_i}}>(A\cap \widetilde{T}_i)$, for each $i\in \{1,\ldots,\ell\}$. Therefore $$\norm G A\geq \norm {\widetilde{N}}A=\norm{\widetilde{T}_1}{A\cap \widetilde{T}_1}\times \cdots \times \norm{\widetilde{T}_\ell}{A\cap \widetilde{T}_\ell}>(A\cap\widetilde{T}_1)\times \cdots\times (A\cap\widetilde{T}_\ell)=A.$$ 
\end{proof}

\begin{proof}[Proof of Theorem~$\ref{thm}$]
We argue by contradiction and among all possible counterexamples we choose $G$ and $A$ with $|G|+|A|$ as small as possible. We let $\Omega$ be the transitive set acted upon by $G$.

As $G$ is insoluble and $A$ is self-normalizing, from Proposition~\ref{prim} we deduce that  $G$ is imprimitive. Among all non-trivial systems of imprimitivity for $G$ choose  $\mathcal{B}$ with blocks of minimal size. We denote by $G_{(\mathcal{B})}$ the pointwise stabilizer of $\mathcal{B}$, that is, $G_{(\mathcal{B})}=\{g\in G\mid B^g=B, \,\textrm{ for each }B\in \mathcal{B}\}$. In particular, $G_{(\mathcal{B})}$ is the kernel of the action of $G$ on $\mathcal{B}$. Also, for $B\in \mathcal{B}$, we write $G_{\{B\}}=\{g\in G\mid B^g=B\}$ for the setwise stabilizer of $B$ and $G_{(B)}=\{g\in G\mid \alpha^g=\alpha, \,\textrm{for each }\,\alpha\in B\}$ for the pointwise stabilizer of $B$. Moreover, given a subgroup $X$ of $G$, we let $X^{\mathcal{B}}$ denote the permutation group induced by $X$ on $\mathcal{B}$. In particular, $X^{\mathcal{B}}\cong XG_{(\mathcal{B})}/G_{(\mathcal{B})}$.

Observe that  $A^{\mathcal{B}}$ is a transitive abelian subgroup of $G^{\mathcal{B}}$, and hence $A^{\mathcal{B}}$ acts regularly on $\mathcal{B}$. We show that $A^{\mathcal{B}}$ is a Carter subgroup of $G^{\mathcal{B}}$. As $A^{\mathcal{B}}\cong AG_{(\mathcal{B})}/G_{\mathcal{B}}$, it suffices to show that $\norm G {AG_{(\mathcal{B})}}=AG_{(\mathcal{B})}$. Let $g\in \norm {G}{AG_{(\mathcal{B})}}$.  Now $(AG_{(\mathcal{B})})^g=AG_{(\mathcal{B})}$ and hence $A,A^g\leq AG_{(\mathcal{B})}$. As $A$ and $A^g$ are Carter subgroups of $G$, they are also Carter subgroups of $AG_{(\mathcal{B})}$. In particular, by~\cite{Vdovin}, $A$ and $A^g$ are conjugate in $AG_{(\mathcal{B})}$. Therefore there exists $h\in G_{(\mathcal{B})}$ with $A^{h}=A^g$. Thus $gh^{-1}\in \norm G A=A$ and $g\in AG_{(\mathcal{B})}$. 

Since $A^{\mathcal{B}}$ is a Carter subgroup of $G^{\mathcal{B}}$, we see that $G^{\mathcal{B}}$ and $A^{\mathcal{B}}$ satisfy the hypothesis of Theorem~\ref{thm}.
As $\mathcal{B}$ is a non-trivial system of imprimitivity, $|A^{\mathcal{B}}|<|A|$ and $|G^{\mathcal{B}}|\leq |G|$ and hence, by minimality, $G^{\mathcal{B}}\cong G/G_{(\mathcal{B})}$ is soluble. In particular, $G_{(\mathcal{B})}$ is insoluble.

Suppose that $AG_{(\mathcal{B})}<G$. Then $A$ is a regular abelian Carter subgroup of $AG_{(\mathcal{B})}$ with $|AG_{(\mathcal{B})}|<|G|$. By minimality, $AG_{(\mathcal{B})}$ is soluble and hence so is $G_{(\mathcal{B})}$, a contradiction. Thus
\begin{equation}\label{eq1}
G=AG_{(\mathcal{B})}.
\end{equation}

From~\eqref{eq1}, we have $G^{\mathcal{B}}=A^{\mathcal{B}}$ and hence $G$ acts regularly on $\mathcal{B}$. In particular, $G_\omega\leq G_{(\mathcal{B})}$, for every $\omega\in \Omega$, and $G_{\{B\}}=G_{(\mathcal{B})}$, for every $B\in \mathcal{B}$. Moreover, by the minimality of $|B|$, the group $G_{(\mathcal{B})}=G_{\{B\}}$ acts primitively on $B$.

We show that
\begin{equation}\label{eq2-}
\norm {G}{A_0}=\cent {G}{A_0},\qquad \textrm{for every }A_0\leq A.
\end{equation}
Clearly the right hand side is contained in the left hand side. Let $g\in \norm {G}{A_0}$. Now $A_0$ is centralized by $A$ and by $A^g$, and so by $\langle A,A^g\rangle$. As $A$ and $A^g$ are Carter subgroups of $\langle A,A^g\rangle$, by~\cite{Vdovin} there exists $x\in \langle A,A^g\rangle$ with $A^g=A^x$. Thus $gx^{-1}\in \norm G A=A$. In particular, $gx^{-1}$ and $x$ centralize $A_0$, and hence so does $g$. Thus $g\in \cent {G}{A_0}$.

Applying~\eqref{eq2-} with $A_0=A\cap G_{(\mathcal{B})}$ we obtain 
\begin{equation}\label{eq2}
\norm {G}{A\cap G_{(\mathcal{B})}}=\cent {G}{A\cap G_{(\mathcal{B})}}.
\end{equation}

Fix $B_1\in \mathcal{B}$ and write $H=G_{\{B_1\}}^{B_1}$, that is, the permutation group induced by $G_{\{B_1\}}$ on the block $B_1$. By the Embedding Theorem~\cite[Theorem~$1.2.6$]{meldrum}, the group $G$ is permutation isomorphic to a subgroup of the wreath product $W=H\wr A^{\mathcal{B}}$ (in its natural imprimitive product action on the cartesian product $B_1\times\mathcal{B}$). Write $\ell=|\mathcal{B}|$. In particular, under this embedding $G_{(\mathcal{B})}$ is identified with a subgroup of $H^\ell$. As $G_{(\mathcal{B})}$ is insoluble and primitive in its action on $B_1$, it follows that
\begin{equation}\label{eq3}
H\quad\textrm{is an insoluble primitive permutation group.}
\end{equation}


Let $N$ be a minimal normal subgroup of $G$ with $N\leq G_{(\mathcal{B})}$. We now divide the proof in two cases: depending on whether $N$ is abelian or non-abelian.

\smallskip

\noindent\textsc{Case I: }$N$ is non-abelian.

\smallskip

\noindent Observe that $A$ is a regular abelian Carter subgroup of $AN$ and that $AN$ is not soluble, hence by minimality  $G=AN$. The modular law gives $G_{(\mathcal{B})}=(A\cap G_{(\mathcal{B})})N$ and thus $H=G_{(\mathcal{B})}^{B_1}=(A\cap G_{(\mathcal{B})})^{B_1}N^{B_1}$. It is easy to check that $(A\cap G_{(\mathcal{B})})^{B_1}$ is a regular abelian subgroup of $H$ and that $N^{B_1}$ is a characteristically simple normal subgroup of $H$. 

We now appeal to the classification of Li~\cite{chli} of the primitive groups with a regular abelian subgroup. Using the notation in~\cite{chli}, we see that in all the groups $X$ in~\cite[Theorem~$1.1$(2)]{chli} the regular abelian subgroup $G$ of $X$ acts trivially by conjugation on the simple direct factors of the socle of $X$. In particular, as $(A\cap G_{(\mathcal{B})})^{B_1}$ acts transitively on the simple direct factors of $N^{B_1}$, we deduce that $N^{B_1}$ is a non-abelian simple group, that is, $H$ is an almost simple group. Furthermore, 
$(H,(A\cap G_{(\mathcal{B})})^{B_1})$ is one of the pairs (denoted by $(\widetilde{T}_i,G_i)$) in~\cite[Theorem~$1.1$(2)~(i), \ldots, (iv)]{chli}.

Let $T_1,\ldots,T_{\ell'}$ be the simple direct factors of $N$ with $T_1$ acting faithfully on $B_1$ and with $N_{(B_1)}=T_2\times\cdots \times T_{\ell'}$. Observe that $T_1,\ldots,T_{\ell'}$ are pairwise isomorphic and that $A$ acts transitively by conjugation on $\{T_1,\ldots,T_{\ell'}\}$. Let $\pi_1:N\to T_1$ be the projection on the first coordinate and set $A_1=\pi_1(N \cap A)$. In particular, replacing $G$ by a suitable conjugate in $\Aut(N)$, we may suppose that $N\cap A=\{(a,\ldots,a)\mid a\in A_1\}\leq A_1^{\ell'}$. Now, a direct inspection on each of the pairs in~\cite[Theorem~$1.1$(2)~(i), \ldots, (iv)]{chli} shows that there exists $h\in \norm {T_1}{A_1}\setminus \cent {T_1}{A_1}$. The element $g=(h,\ldots,h)\in T_1\times \cdots\times T_{\ell'}=N$ and $g\in \norm G {A\cap N}\setminus \cent G {A\cap N}$, contradicting~\eqref{eq2-} (applied with $A_0=A\cap N$). 

\smallskip

\noindent\textsc{Case II: }$N$ is an elementary abelian $p$-group for some prime $p$.

\smallskip

\noindent Now $N^{B_1}$ is a normal elementary abelian $p$-subgroup of $G_{(\mathcal{B})}^{B_1}=H$. As $H$ is primitive,  $N^{B_1}$ is the socle of $H$ and $H$ is a primitive group of affine type, that is, $H$ is isomorphic to a primitive subgroup of the affine general linear group $\AGL_d(p)$ for some  $d\geq 1$.

Let $V$ be the socle of $H$, let $V^\ell$ be the socle of $H^\ell$ (as a subgroup of $W$) and let $U=V^\ell\cap G$.  Let $\pi_1:G_{(\mathcal{B})}\to H$ be the projection onto the first coordinate and set $A_1=\pi_1(A\cap G_{(\mathcal{B})})$. Since $A_1=(A\cap G_{\mathcal{B}})^{B_1}$, we see that $A_1$ is a regular abelian subgroup of $H$. Moreover, as $A$ acts transitively by conjugation on the $\ell$ coordinates of $H^\ell$, replacing $G$ by a suitable conjugate, under the embedding $G\leq W$ we have 
\begin{equation}\label{eq4}
A\cap G_{(\mathcal{B})}=\{(a,\ldots,a)\mid a \in A_1\}.
\end{equation}

Suppose that $A_1=V$.  Then $A\cap G_{(\mathcal{B})}\leq V^\ell\cap G=U$. Moreover, $G_{(\mathcal{B})}/U$ is isomorphic to a subgroup of the direct product $\GL_d(p)^\ell$.  Since $G_{(\mathcal{B})}$ is insoluble, so is $X=G_{(\mathcal{B})}/U$. Furthermore, $A$ acts as a group of automorphisms on $X$. From~\cite[Theorem~0.11]{BH}, we have $\cent X A\neq 1$ and hence there exists $g\in G_{(\mathcal{B})}\setminus U$ with $[g,A]\leq U$. As $A$ acts transitively on the elements of $\mathcal{B}$,  $g=(v_1h,v_2h,\ldots,v_\ell h)$ for some $h\in H$ and some $v_1,\ldots,v_\ell\in V$. As $g\notin U$, we have $h\notin V$ (if $h\in V$, then $g\in V^\ell\cap G=U$, a contradiction). As $V=\cent H V$, we get  $h\notin \cent H V$ and $h\in H=\norm H V$. From~\eqref{eq4} it is clear that $g\in \norm G{A\cap G_{(\mathcal{B})}}\setminus \cent G{A\cap G_{(\mathcal{B})}}$, which contradicts~\eqref{eq2}.

Suppose that $A_1\neq V$. We show that there exists $u\in \norm U{A\cap G_{(\mathcal{B})}}\setminus\cent U{A\cap G_{(\mathcal{B})}}$, from which the theorem will immediately follow by contradicting~\eqref{eq2}. Since $A_1 V>A_1$ and $\cent H{A_1}={A_1}$, there exists $\bar{v}\in \norm V {A_1}\setminus \cent H{A_1}$. We label the elements of $\mathcal{B}$ by $B_1,B_2,\ldots,B_{\ell}$, where the action of $g=(g_1,\ldots,g_\ell)\in G_{(\mathcal{B})}$ on $B_i$ is given by the element  $g_{i}$ on the $i^{\mathrm{th}}$ coordinate of $g$.

Assume that $G_{(B_1)}=1$. Let $u\in U$ with $u^{B_1}=\bar{v}$, that is, $u=(\bar{v},v_2,\ldots,v_\ell)$, for some $v_2,\ldots,v_\ell\in V$. By construction $u$ normalizes $A\cap G_{(\mathcal{B})}$ modulo $G_{(B_1)}$ and hence $u$ normalizes $A\cap G_{(\mathcal{B})}$ because $G_{(B_1)}=1$. 

Assume that $G_{(B_1)}\neq 1$. Let $B\in \mathcal{B}\setminus\{B_1\}$ with $(G_{(B_1)})^{B}\neq 1$. Relabelling the set $\mathcal{B}$ if necessary, we may assume that $B=B_2$. As $G_{(B_1)}\lhd G_{(\mathcal{B})}$ and $G_{(\mathcal{B})}$ acts primitively on $B_2$, we get $V\leq (G_{(B_1)})^{B_2}$. We show that $$U=U_{(B_1)}U_{(B_2)}.$$ Let $z=(v_1,v_2,v_3,\ldots,v_\ell)\in U$. As  
$V=(U_{(B_1)})^{B_2}$, there exists $x=(1,v_2,v_3',\ldots,v_\ell')\in U_{(B_1)}$, for some $v_3',\ldots,v_\ell'\in V$. Clearly, $zx^{-1}=(v_1,1,v_3v_3'^{-1},\ldots,v_\ell v_\ell'^{-1})\in U_{(B_2)}$.

Now we show that  $$(\bar{v},\bar{v},v_3,\ldots,v_\ell)\in U,\qquad \textrm{for some }v_3,\ldots,v_\ell\in V.$$ As $U=U_{(B_1)}U_{(B_2)}$, we get $V=(U_{(B_1)})^{B_2}$ and hence there exists $y=(1,\bar{v},v_3',\ldots,v_\ell')\in U_{(B_1)}$. Similarly,  as $V=(U_{(B_2)})^{B_1}$, there exists $x=(\bar{v},1,v_3'',\ldots,v_\ell'')\in U_{(B_2)}$. Now, $xy\in U$ and the first two coordinates of $xy$ are $\bar{v},\bar{v}$. 

Assume that $G_{(B_1)}\cap G_{(B_2)}=1$, that is, $G_{(B_1\cup B_2)}=1$. Let $u=(\bar{v},\bar{v},v_3,\ldots,v_\ell)\in U$. Now, from~\eqref{eq4}, we see that $u$ normalizes $A\cap G_{(\mathcal{B})}$ modulo $G_{(B_1\cup B_2)}$ and hence $u$ normalizes $A\cap G_{(\mathcal{B})}$ because $G_{(B_1\cup B_2)}=1$.

Assume that $G_{(B_1)}\cap G_{(B_2)}\neq 1$.  Let $B\in \mathcal{B}\setminus\{B_1,B_2\}$ with $(G_{(B_1\cup B_2)})^{B}\neq 1$. Relabelling the set $\mathcal{B}$ if necessary, we may assume that $B=B_3$. As $G_{(B_1\cup B_2)}\lhd G_{(\mathcal{B})}$ and $G_{(\mathcal{B})}$ acts primitively on $B_3$, we get $V\leq (G_{(B_1\cup B_2)})^{B_3}$. We show that $$U=U_{(B_1\cup B_2)}U_{(B_3)}.$$ Let $z=(v_1,v_2,v_3,\ldots,v_\ell)\in U$. As  
$V=(U_{(B_1\cup B_2)})^{B_3}$, there exists $x=(1,1,v_3,v_4',\ldots,v_\ell')\in U_{(B_1\cup B_2)}$, for some $v_4',\ldots,v_\ell'\in V$. Clearly, $zx^{-1}\in U_{(B_3)}$.

Now we show that  $$(\bar{v},\bar{v},\bar{v},v_4,\ldots,v_\ell)\in U,\qquad \textrm{for some }v_4,\ldots,v_\ell\in V.$$ Indeed, from above there exists $z=(\bar{v},\bar{v},w_3,\ldots,w_\ell)\in U$, for some $w_3,\ldots,w_\ell\in V$. Moreover,
as $U=U_{(B_1\cup B_2)}U_{(B_3)}$, we get $V=U^{B_3}=(U_{(B_1\cup B_2)})^{B_3}$ and hence there exists $y=(1,1,w_3^{-1}v,w_4',\ldots,w_\ell')\in U_{(B_1\cup B_2)}$. Now, $zy\in U$ and the first three coordinates of $zy$ are $\bar{v},\bar{v},\bar{v}$.

Assume that $G_{(B_1\cup B_2)}\cap G_{(B_3)}=1$, that is, $G_{(B_1\cup B_2\cup B_3)}=1$. Let $u=(\bar{v},\bar{v},\bar{v},v_4,\ldots,v_\ell)\in U$. Now, from~\eqref{eq4}, we see that $u$ normalizes $A\cap G_{(\mathcal{B})}$ modulo $G_{(B_1\cup B_2\cup B_3)}$ and hence $u$ normalizes $A\cap G_{(\mathcal{B})}$ because $G_{(B_1\cup B_2\cup B_3)}=1$.

Finally, the case $G_{(B_1\cup B_2\cup B_3)}\neq 1$ follows easily by induction applying the same two-step argument as in the previous paragraphs.
\end{proof}

\thebibliography{10}

\bibitem{BG}L.~Babai, C.~D.~Godsil, On the automorphism groups of almost all Cayley graphs, \textit{European J. Combin.} \textbf{3} (1982), 9--15.

\bibitem{BH}V.~V.~Belyaev, B.~Hartley, Centralizers of finite nilpotent subgroups in locally finite groups, \textit{Algebra and Logic} \textbf{35} (1996), 217--228.

\bibitem{Carter}R.~W.~Carter, Nilpotent self-normalizing subgroups of soluble groups, \textit{Math. Z.} \textbf{75} (1961), 136--139.

\bibitem{ATLAS} J.~H.~Conway, R.~T.~Curtis, S.~P.~Norton, R.~A.~Parker, R.~A.~Wilson, \textit{Atlas of Finite groups}, Claredon Press, Oxford 1985.  

\bibitem{DPS}E.~Dobson, P.~Spiga, G.~Verret, Cayley graphs on abelian groups, submitted.

\bibitem{GLS} D.~Gorenstein, R.~Lyons, R.~Solomon, \textit{The classification of the Finite Simple Groups}, Number~$3$. Mathematical Surveys and Monographs Vol 40, 1998.

\bibitem{chli}C.~H.~Li, The finite primitive permutation groups containing an abelian regular subgroup, \textit{Proc. London Math. Soc. }\textbf{87} (2003), 725--747. 

\bibitem{meldrum}J.~D.~P.~Meldrum, Wreath products of groups and semigroups, Pitman Monographs and Surveys in Pure and Applied Mathematics, vol. 74, Longman, Harlow, 1995.

\bibitem{Ro}P.~Rowley, Finite groups admitting a fixed-point-free automorphism group, \textit{J. Algebra} \textbf{174} (1995), 724--727. 


\bibitem{Suzuki}M.~Suzuki, \textit{Group Theory II}, Grundlehren der mathematischen Wissenschaften \textbf{248}, Springer-Verlag, 1986.

\bibitem{Th}J.~Thompson, Finite groups with fixed-point-free automorphisms of prime order, \textit{Proc. Nat. Acad. Sci. U.S.A.} \textbf{45} (1959), 578--581.

\bibitem{Vdovin}E.~P.~Vdovin, Carter subgroups of finite groups, \textit{Mat. Tr.} \textbf{11} (2008), 20--106. 

\end{document}